\documentclass[12pt,a4paper]{amsart}
\usepackage[latin1]{inputenc}
\usepackage{latexsym}
\usepackage{color,graphicx,shortvrb}
\usepackage{amsmath, amssymb}
\usepackage{amsfonts}

\newtheorem{theorem}{Theorem}[section]

\newtheorem{corollary}[theorem]{Corollary}

\theoremstyle{definition}

\newtheorem{remark}[theorem]{Remark}



\setlength{\textwidth}{16cm}       
\setlength{\oddsidemargin}{0.25cm}   
\setlength{\evensidemargin}{0.25cm}  
\setlength{\topmargin}{1.2cm}     

\author{J.~M.~Almira}
\title{A Montel-type theorem for mixed differences}

\begin{document}
\keywords{}



\begin{abstract}
We prove a generalization of classical Montel's theorem for the mixed differences case, for polynomials and exponential polynomial functions, in commutative setting.
\end{abstract}

\subjclass[2010]{Primary 39A05, 39A70.}

\keywords{Equations in iterated differences, polynomials, exponential polynomials, Montel-type theorem}

\maketitle

\markboth{J. M. Almira}{Montel-type theorem for mixed differences}

\section{Introduction}
Given $G$ a group, the function $f:G\to\mathbb{C}$ is called a polynomial of degree $\leq n$ if  satisfies Fréchet's  mixed differences equation 
\begin{equation} \label{mix}
\Delta_{h_{n+1}}\Delta_{h_n}\cdots \Delta_{h_{1}}f=0 \text{ for all } h_1,\cdots, h_{n+1}\in G,
\end{equation}
and it is called a semipolynomial of degree  $\leq n$ if satisfies Fréchet's  unmixed differences equation 
\begin{equation} \label{unmix}
\Delta_{h}^{n+1} f=0 \text{ for all } h\in G.
\end{equation}
Here, $\Delta_h:\mathbb{C}^G\to \mathbb{C}^G$ is the forward differences operator given by $\Delta_hf(x)=f(xh)-f(x)$, $\Delta_{h_{n+1}}\Delta_{h_n}\cdots \Delta_{h_{1}} = \Delta_{h_{n+1}}(\Delta_{h_n}\cdots \Delta_{h_{1}})$ denotes the composition of the operators  
$\Delta_{h_1}, \cdots, \Delta_{h_{n+1}}$  and $\Delta_{h}^{n+1} = \Delta_{h} (\Delta_{h}^{n})$. If $G$ is commutative,  Djokovi\'{c}'s Theorem \cite{Dj} guarantees that equations \eqref{mix} and \eqref{unmix} are equivalent (see also \cite{L_Dj} for a different proof of this result) so that both concepts represent the very same class of functions.  
Finally, we say that $f:G\to\mathbb{C}$ is an exponential polynomial if the space $\tau(f)=\mathbf{span}\{\tau_h(f):h\in G\}$ is finite dimensional. Here $\tau_h:\mathbb{C}^G\to \mathbb{C}^G$ is the translation operator defined by $\tau_hf(x)=f(xh)$. Obviously, a function $f$ is an exponential polynomial if and only if belongs to some finite-dimensional translation invariant vector subspace $V$ of $\mathbb{C}^G$. A main motivation for this definition is that, for $G=\mathbb{R}^d$, a continuous function $f:\mathbb{R}^d\to\mathbb{C}$ is an exponential polynomial if and only if is a finite sum of functions of the form $p(x)e^{\langle x,\lambda\rangle}$ for certain ordinary polynomials $p(x)$ and vectors  $\lambda\in\mathbb{C}^d$. Moreover, if $f$ is a complex valued Schwartz distribution which belongs to a finite-dimensional translation invariant space of distributions, then $f$ is equal, in distributional sense, to a continuous complex valued exponential polynomial on $\mathbb{R}^d$ (see, e.g., \cite{anselone}, \cite{E}, \cite{Laird1}, \cite{Lo} or \cite{St} for the proofs of these claims) 

The following result, which was proved in \cite{AS_AM} (see also  \cite{AA_AM}, \cite{AK_CJM}), generalizes a well known theorem of Montel \cite{montel_1935}, \cite{montel}:

\begin{theorem} \label{MT_unmixed}
Let $G$ be a commutative (topological) group and $f:G\to \mathbb{C}$ be a (continuous) function. Assume that $E=\{h_1,\cdots,h_s\}$ (topologically) generates $G$ and let $\Delta_{h_k}^{n_k+1}f=0$, for $k=1,\cdots, s$. Then $f$  is a polynomial on $G$ of degree at most $n_1+\cdots+n_s$.
Moreover, if $G=\mathbb{R}^d$, $\{h_1,\cdots,h_s\}$ topologically generate $\mathbb{R}^d$ and $f$ is a complex valued Schwartz distribution on $\mathbb{R}^d$ such that  $\Delta_{h_k}^{n_k+1}f=0$, for $k=1,\cdots, s$. Then $f$  is, in distributional sense, a continuous  polynomial on $\mathbb{R}^d$ of degree at most $n_1+\cdots+n_s$. In particular, $f$ is equal almost everywhere to an ordinary polynomial with total degree at most $n_1+\cdots+n_s$.
\end{theorem}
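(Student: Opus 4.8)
The plan is to settle the purely algebraic core first and then upgrade to the continuous and distributional settings by density and regularization. Since $G$ is commutative, all translation operators commute, and hence so do all the difference operators; this commutativity is the engine of the whole argument. The starting point is the pair of identities
\[
\Delta_{ab}=\Delta_a+\Delta_b+\Delta_a\Delta_b,\qquad \Delta_{a^{-1}}=-\tau_{a^{-1}}\Delta_a,
\]
which follow at once from $\tau_{ab}=\tau_a\tau_b$ and $\Delta_a=\tau_a-I$. Writing $\mathcal{A}$ for the commutative algebra of operators on $\mathbb{C}^G$ generated by the $\tau_{h_k}^{\pm1}$, I would first prove by induction on word length that for every $g$ in the subgroup generated by $E$ one has an expansion $\Delta_g=\sum_{k=1}^s \Delta_{h_k}P_{k,g}$ with $P_{k,g}\in\mathcal{A}$: the two identities above furnish the base cases (generators and inverses) and the inductive step (products), and each word being finite keeps the representation finite. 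Because $E$ generates $G$, this representation is available for every $g\in G$ in the discrete case.

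Set $m=n_1+\cdots+n_s$. The key step is to show $\Delta_{g_1}\cdots\Delta_{g_{m+1}}f=0$ for all $g_i\in G$, which is exactly the assertion that $f$ is a polynomial of degree $\le m$ in the sense of \eqref{mix}. Substituting the expansion of each $\Delta_{g_i}$ and multiplying out, the product becomes a finite sum of terms; using commutativity to collect factors, each term equals $\Delta_{h_{k_1}}\cdots\Delta_{h_{k_{m+1}}}\,Q$ with $Q\in\mathcal{A}$, carrying exactly $m+1$ generator differences. Among the indices $k_1,\dots,k_{m+1}\in\{1,\dots,s\}$ a pigeonhole count shows some index $j$ occurs at least $n_j+1$ times, since otherwise their total number would be at most $\sum_k n_k=m<m+1$. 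By commutativity I then factor that term as $Q'\,\Delta_{h_j}^{\,n_j+1}$ with $Q'\in\mathcal{A}$, so that applying it to $f$ gives $Q'(\Delta_{h_j}^{\,n_j+1}f)=0$. Hence every term annihilates $f$, proving the first assertion. The crux, and the step I expect to require the most care, is precisely this reduction: guaranteeing that after expansion \emph{every} term carries $m+1$ generator differences, so that the pigeonhole always supplies a lethal block $\Delta_{h_j}^{\,n_j+1}$.

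For the topological statement, the same computation gives $\Delta_{g_1}\cdots\Delta_{g_{m+1}}f=0$ whenever each $g_i$ lies in the dense subgroup generated by $E$. Since $f$ is continuous, the map $(g_1,\dots,g_{m+1},x)\mapsto \Delta_{g_1}\cdots\Delta_{g_{m+1}}f(x)$ is continuous and vanishes on a dense set, hence vanishes identically, extending the conclusion to all $g_i\in G$ and showing $f$ is a polynomial of degree $\le m$.

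For the distributional case on $\mathbb{R}^d$, I would run the algebraic argument verbatim, since difference operators act on $\mathcal{D}'(\mathbb{R}^d)$ through translation and the commutation relations are unchanged, obtaining $\Delta_{g_1}\cdots\Delta_{g_{m+1}}f=0$ for all $g_i$ in the dense subgroup generated by $E$. Because $h\mapsto\tau_h f$ is continuous from $\mathbb{R}^d$ into $\mathcal{D}'(\mathbb{R}^d)$, the density argument upgrades this to all $g_i\in\mathbb{R}^d$. To identify $f$ with an ordinary polynomial I would regularize: for a mollifier $\varphi_\varepsilon$ one has $\Delta_h(f*\varphi_\varepsilon)=(\Delta_h f)*\varphi_\varepsilon$, so the smooth function $f*\varphi_\varepsilon$ satisfies the same mixed-difference equation of order $m+1$ and is therefore, by the already-established continuous case, an ordinary polynomial of total degree $\le m$. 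Letting $\varepsilon\to0$ gives $f*\varphi_\varepsilon\to f$ in $\mathcal{D}'(\mathbb{R}^d)$; since the space of polynomials of degree $\le m$ is finite-dimensional, hence closed in $\mathcal{D}'(\mathbb{R}^d)$, the limit $f$ is itself such a polynomial, equal almost everywhere to an ordinary polynomial of total degree $\le m$, which is the stated conclusion.
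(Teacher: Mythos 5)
Your proof is correct, but be aware that the paper itself contains no proof of Theorem \ref{MT_unmixed}: it is imported verbatim from \cite{AS_AM} (see also \cite{AA_AM}, \cite{AK_CJM}), so the comparison is with those published arguments rather than with anything in this text. The published route works essentially at the level of the group algebra: the key lemma is that $(XY-1)^{n+m+1}$ lies in the ideal generated by $(X-1)^{n+1}$ and $(Y-1)^{m+1}$ (expand $\bigl((X-1)Y+(Y-1)\bigr)^{n+m+1}$ and observe that in each binomial term one of the two exponents is large enough), which gives $\Delta_{hk}^{n+m+1}f=0$ for products of generators and then passes between \eqref{unmix} and \eqref{mix} via Djokovi\'{c}'s theorem. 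You instead expand each $\Delta_g$ as $\sum_{k}\Delta_{h_k}P_{k,g}$ with operator coefficients in the commutative algebra generated by the $\tau_{h_k}^{\pm 1}$ and apply the pigeonhole directly inside $\Delta_{g_1}\cdots\Delta_{g_{m+1}}f$; this yields the mixed Fr\'{e}chet equation \eqref{mix} directly and bypasses Djokovi\'{c}'s equivalence entirely, which is a genuine simplification. The step you flagged as delicate is in fact automatic: each factor $\Delta_{g_i}=\sum_k\Delta_{h_k}P_{k,g_i}$ contributes exactly one generator difference to every term of the expanded product, so every term carries exactly $m+1$ of them and the pigeonhole always produces a block $\Delta_{h_j}^{n_j+1}$. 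The topological and distributional upgrades by density, weak-$*$ continuity of $h\mapsto\tau_h f$, mollification, and closedness of the finite-dimensional polynomial space are all sound; the only ingredient you should cite explicitly is Fr\'{e}chet's classical theorem (the paper's reference \cite{frechet}) to pass from ``$f*\varphi_\varepsilon$ is a continuous solution of \eqref{mix} of order $m+1$'' to ``$f*\varphi_\varepsilon$ is an ordinary polynomial of total degree at most $m$''. This is a standard fact and not a gap.
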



In 1948 Montel \cite{montel_mixed1}, \cite{montel_mixed2} demonstrated, for mixed differences, a version of his theorem,  for complex functions of one and two real variables.  We generalize his result, with an easier proof, to complex functions defined on any finitely generated (topologically finitely generated) group (topological group) $G$, and to complex valued Schwartz distributions defined on $\mathbb{R}^d$. In particular, our result applies to complex valued functions depending on any finite number of real variables and serves for the characterization of polynomials and exponential polynomials as solutions of certain finite sets of mixed-differences functional equations. 


\section{Main results}

\begin{theorem} \label{main} 
Let  $G$ be a commutative group and let $f:G\to \mathbb{C}$  be a function. If  there exist exponential polynomials $P_k:G\to \mathbb{C}$, $k=1,\cdots, s$ and elements $\{h_{i,j}\}_{1\leq i\leq n;1\leq j\leq s}$ of $G$ such that,  for every $(i_1,\cdots,i_s)\in\{1,2,\cdots,n\}^s$, the set $\{h_{i_1,1},\cdots,h_{i_s,s}\}$ is a generating system of $G$, and
 \begin{equation}\label{teomontelmixtafuerte}
\Delta_{h_{1,k},h_{2,k},\cdots,h_{n,k}}f(x)=P_k(x) \text{, for all  }   k=1,2,\cdots,s, \text{ and all } x\in G,
\end{equation}
then $f$ is an exponential polynomial.
\end{theorem}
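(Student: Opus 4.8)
The plan is to recast the statement in the group algebra $\C[G]$ and reduce the whole conclusion to a single finite-codimensionality statement. Recall that $f$ is an exponential polynomial precisely when $\dim\tau(f)<\infty$. Writing $\C[G]$ for the convolution algebra of finitely supported functions on $G$, the rule $\mu*f=\sum_{g}\mu(g)\,\tau_g f$ turns $\C^G$ into a module over the commutative ring $\C[G]$, with $(\delta_h-\delta_e)*f=\Delta_h f$. The evaluation map $\mu\mapsto\mu*f$ then identifies $\tau(f)$ with $\C[G]/\mathrm{Ann}(f)$, where $\mathrm{Ann}(f)=\{\mu:\mu*f=0\}$ is an ideal. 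Hence $f$ is an exponential polynomial if and only if $\mathrm{Ann}(f)$ has finite codimension in $\C[G]$, and the entire argument becomes an effort to exhibit a concrete finite-codimension ideal sitting inside $\mathrm{Ann}(f)$.

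First I would encode the hypotheses. Set $d_k=\prod_{i=1}^{n}(\delta_{h_{i,k}}-\delta_e)\in\C[G]$, so that \eqref{teomontelmixtafuerte} reads $d_k*f=P_k$. Since each $P_k$ is an exponential polynomial, $I_k:=\mathrm{Ann}(P_k)$ has finite codimension. For $\mu\in I_k$ we compute $(\mu*d_k)*f=\mu*(d_k*f)=\mu*P_k=0$, so $I_kd_k\subseteq\mathrm{Ann}(f)$, where $I_kd_k$ denotes the ideal $\{\mu*d_k:\mu\in I_k\}$. Consequently $J:=\sum_{k=1}^{s}I_kd_k\subseteq\mathrm{Ann}(f)$, and by the previous paragraph it suffices to prove that $J$ has finite codimension in $\C[G]$.

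Next I would pass to the zero set. Taking the transversal $(1,\dots,1)$ shows $\{h_{1,1},\dots,h_{1,s}\}$ generates $G$, so $G$ is finitely generated and $\C[G]$ is a finitely generated commutative $\C$-algebra whose maximal ideals correspond to characters $\chi\colon G\to\C^{*}$. The key computation is $V(J)=\bigcap_{k=1}^{s}\big(V(I_k)\cup V(d_k)\big)$, where $V(I_k)$ is the finite set $S_k$ of characters occurring in $P_k$, and $V(d_k)=\{\chi:\chi(h_{i,k})=1\text{ for some }i\}$ because $d_k$ evaluates to $\prod_i(\chi(h_{i,k})-1)$ at $\chi$. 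This is exactly where the hypothesis enters: if $\chi\in V(J)$ lies outside the finite set $\bigcup_k S_k$, then for each $k$ there is an index $i(k)$ with $\chi(h_{i(k),k})=1$; the generating assumption makes $\{h_{i(1),1},\dots,h_{i(s),s}\}$ a generating set of $G$, and a character trivial on a generating set is the trivial character $\one$. Hence $V(J)\subseteq\{\one\}\cup\bigcup_k S_k$ is finite.

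It remains to upgrade ``finite zero set'' to ``finite codimension'', and this is the step I expect to be the main obstacle: a priori $J$ need not be a radical ideal, so one must rule out infinite local multiplicities at the finitely many points of $V(J)$. Here I would invoke that $\C[G]$ is Noetherian, as $G$ is finitely generated: the quotient $\C[G]/J$ is a finitely generated $\C$-algebra whose maximal spectrum $V(J)$ is finite, hence of Krull dimension $0$, hence Artinian; being Artinian and finitely generated over $\C$ (with all residue fields equal to $\C$ by the Nullstellensatz) it is finite-dimensional. Thus $J$, and a fortiori $\mathrm{Ann}(f)$, has finite codimension, so $\dim\tau(f)<\infty$ and $f$ is an exponential polynomial. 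I would emphasize that the finitely generated hypothesis on $G$ is essential precisely for this last, Noetherian, step, while the genericity hypothesis on the $h_{i,j}$ is exactly what confines the variety of $J$ to the finite exceptional set.
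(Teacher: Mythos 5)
Your proof is correct, but it follows a genuinely different route from the paper. The paper argues by induction on $n$: the case $n=1$ is handled by observing that $V=\spn\{f\}+\tau(P_1)+\cdots+\tau(P_s)$ is a finite-dimensional space invariant under each $\tau_{h_{1,k}}$, hence under all of $G$; the inductive step is a combinatorial bookkeeping argument that commutes difference operators so as to delete one step at a time from the operators $\Delta_{h_{i_1,1},\dots,h_{i_s,s}}$, eventually reducing everything to the case $n=1$. You instead translate the problem into the group algebra $\mathbb{C}[G]$, place the ideal $J=\sum_k \mathrm{Ann}(P_k)\,d_k$ inside $\mathrm{Ann}(f)$, confine $V(J)$ to a finite set of characters via the generating hypothesis, and conclude finite codimension from the Noetherian/Artinian machinery together with the Nullstellensatz. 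Each step of yours checks out: the identification $\tau(f)\cong \mathbb{C}[G]/\mathrm{Ann}(f)$, the inclusion $I_kd_k\subseteq\mathrm{Ann}(f)$, the computation $V(J)=\bigcap_k\bigl(V(I_k)\cup V(d_k)\bigr)$, the use of the transversal generating condition to force any character in $V(J)\setminus\bigcup_kV(I_k)$ to be trivial, and the passage from a finite maximal spectrum of the finitely generated algebra $\mathbb{C}[G]/J$ to finite $\mathbb{C}$-dimension (Jacobson property gives Krull dimension $0$, hence Artinian, hence finite length with all residue fields $\mathbb{C}$). What the two approaches buy is complementary: the paper's induction is elementary and self-contained, using nothing beyond the commutation of difference operators and the $n=1$ case, at the price of intricate index manipulation; your argument imports the Hilbert basis theorem and the Nullstellensatz but is shorter, conceptually transparent, isolates exactly where the ``transversal generation'' hypothesis is used (to pin $V(J)$ down to the trivial character outside a finite exceptional set), and in principle localizes the spectrum of $f$, which the inductive proof does not exhibit.
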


\begin{proof}  We proceed by induction on $n$. 
If we assume that 
$\Delta_{h_{1,k}}f=P_k$, $k=1,\cdots, s$, then we have that $\Delta_{h_{1,k}}(V)\subseteq V$ for $k=1,\cdots,s$, with $V=\mathbf{span}\{f\}+\tau(P_1)+\tau(P_2)+\cdots +\tau(P_s)$ 
Hence $V$ is a finite dimensional translation invariant space, which implies that all its elements are exponential polynomials, and $f\in V$. This proves the result for  $n=1$.

Assume the result holds true for $n-1$ and let $f$ satisfy \eqref{teomontelmixtafuerte}. Take $i=(i_1,\cdots,i_s) \in\{1,2,\cdots,n\}^s$ and define the function $F_i=\Delta_{h_{i_1,1}, h_{i_2,2},\cdots,h_{i_s,s}}f$. Then, if we denote by $\widehat{h_{i_k,k}}$ the fact that the step $h_{i_k,k}$ is hidden (i.e. it does not appear) in a formula, we have that
\begin{eqnarray*}
\Delta_{h_{1,k},h_{2,k},\cdots,\widehat{h_{i_k,k}}, \cdots, h_{n,k}}(F_i) &=& \Delta_{h_{1,k},h_{2,k},\cdots,\widehat{h_{i_k,k}}, \cdots, h_{n,k}}(\Delta_{h_{i_1,1}, h_{i_2,2},\cdots,h_{i_s,s}}f)\\
&=& \Delta_{h_{i_1,1}, h_{i_2,2}, \cdots,\widehat{h_{i_k,k}},\cdots,h_{i_s,s}} (\Delta_{h_{1,k},h_{2,k},\cdots,h_{i_k,k}, \cdots, h_{n,k}}f) \\
&=& \Delta_{h_{i_1,1}, h_{i_2,2}, \cdots,\widehat{h_{i_k,k}},\cdots,h_{i_s,s}} (P_k)=Q_k\\
\end{eqnarray*} 
with $Q_k$ being an exponential polynomial  for $k=1,2\cdots, s$. Hence the induction hypothesis tell us that $F_i$ is an exponential polynomial   for every $i$.

We want to reduce the size of the operator $\Delta_{h_{i_1,1}, h_{i_2,2},\cdots,h_{i_s,s}}$ used for the definition of $F_i$. So, consider, for each $k\in\{1,\cdots,s\}$,  the new function $G_{i,k}= \Delta_{h_{i_1,1},\cdots,\widehat{h_{i_k,k}},\cdots, h_{i_s,s}}f$ (which results from deleting the step $h_{i_k,k}$ in the definition of $F_i$). Then we have that
\begin{eqnarray*}
\Delta_{h_{1,k},h_{2,k},\cdots,h_{n-1,k}}(G_{i,k}) &=& \Delta_{h_{1,k},h_{2,k},\cdots,h_{n-1,k}}(\Delta_{h_{i_1,1},\cdots,\widehat{h_{i_k,k}},\cdots, h_{i_s,s}}f)\\
 &=& \Delta_{h_{1,k},\cdots \widehat{h_{k,k}},\cdots,h_{n-1,k}}(\Delta_{h_{i_1,1},\cdots, h_{i_{k-1},k-1},h_{k,k},  h_{i_{k+1},k+1},\cdots,h_{i_s,s}}f)
\end{eqnarray*}
is an exponential polynomial , since $\Delta_{h_{i_1,1},\cdots, h_{i_{k-1},k-1},h_{k,k},  h_{i_{k+1},k+1},\cdots,h_{i_s,s}}f$ is an exponential polynomial. Furthermore, for $j\neq k$ we have that
\begin{eqnarray*}
\Delta_{h_{1,j},h_{2,j},\cdots,,\widehat{h_{i_j,j}},\cdots, h_{n,j}}(G_{i,k}) &=& \Delta_{h_{1,j},h_{2,j},\cdots,,\widehat{h_{i_j,j}},\cdots, h_{n,k}}(\Delta_{h_{i_1,1},\cdots,\widehat{h_{i_k,k}},\cdots, h_{i_s,s}}f)\\
 &=& \Delta_{h_{i_1,1},,\cdots,\widehat{h_{i_j},j},\cdots,\widehat{h_{i_k,k}},\cdots, h_{i_s,s}}(\Delta_{h_{1,j},h_{2,j},\cdots,h_{i_j-1,j},h_{i_j,j}, h_{i_j+1,j}\cdots, h_{n,j}}f)\\
  &=& \Delta_{h_{i_1,1},\cdots,\widehat{h_{i_j},j},\cdots,\widehat{h_{i_k,k}},\cdots, h_{i_s,s}}(P_j)\\
\end{eqnarray*}
which is also an exponential polynomial. Hence the function $G_{i,k}$ satisfies a set of equations like \eqref{teomontelmixtafuerte} with $n-1$ steps, and the induction hypothesis implies that $G_{i,k}$ is also an exponential polynomial. In other words,  every function of the form $\Delta_{h_{j_1,a_1}, h_{j_2,a_2}, \cdots, h_{j_{s-1},a_{s-1}}}f$ with $0\leq j_k\leq n$ and $\{a_1,\cdots,a_{s-1}\}$ any subset of cardinality $s-1$ of $\{1,\cdots,s\}$, is an exponential polynomial .

Let us still do a step more: Take $t\in\{1,\cdots,s\}$, $t\neq k$, and consider the function $G_{i,k,t}=\Delta_{h_{i_1,1},\cdots,\widehat{h_{i_k,k}},\cdots,\widehat{h_{i_t,t}},\cdots,h_{i_s,s}}f$.  Then
\begin{eqnarray*}
\Delta_{h_{1,k},h_{2,k},\cdots,h_{n-1,k}}(G_{i,k,t}) &=& \Delta_{h_{1,k},h_{2,k},\cdots,h_{n-1,k}}(\Delta_{h_{i_1,1},\cdots,\widehat{h_{i_k,k}},\cdots, \widehat{h_{i_t,t}},\cdots,h_{i_s,s}}f)\\
 &=& \Delta_{h_{1,k},\cdots \widehat{h_{k,k}},\cdots,h_{n-1,k}}(\Delta_{h_{i_1,1},\cdots, h_{i_{k-1},k-1},h_{k,k},  h_{i_{k+1},k+1},\cdots, \widehat{h_{i_t,t}},\cdots, h_{i_s,s}}f)
\end{eqnarray*}
is an exponential polynomial , since $\Delta_{h_{i_1,1},\cdots, h_{i_{k-1},k-1},h_{k,k},  h_{i_{k+1},k+1},\cdots,  \widehat{h_{i_t,t}},\cdots, h_{i_s,s}}f$ is an exponential polynomial. Furthermore,
\begin{eqnarray*}
\Delta_{h_{1,t},h_{2,t},\cdots,h_{n-1,t}}(G_{i,k,t}) &=& \Delta_{h_{1,t},h_{2,t},\cdots,h_{n-1,t}}(\Delta_{h_{i_1,1},\cdots,\widehat{h_{i_k,k}},\cdots, \widehat{h_{i_t,t}},\cdots,h_{i_s,s}}f)\\
 &=& \Delta_{h_{1,t},\cdots \widehat{h_{t,t}},\cdots,h_{n-1,t}}(\Delta_{h_{i_1,1},\cdots,  \widehat{h_{i_k,k}},\cdots h_{i_{t-1},t-1},h_{t,t},  h_{i_{t+1},t+1},\cdots,\cdots, h_{i_s,s}}f)
\end{eqnarray*}
is an exponential polynomial , since $\Delta_{h_{i_1,1},\cdots,  \widehat{h_{i_k,k}},\cdots h_{i_{t-1},t-1},h_{t,t},  h_{i_{t+1},t+1},\cdots,\cdots, h_{i_s,s}}f$ is an exponential polynomial .
Finally, for $j\in\{1,\cdots,s\}\setminus \{k,t\}$ we have that
\begin{eqnarray*}
&\ & \Delta_{h_{1,j},h_{2,j},\cdots,,\widehat{h_{i_j,j}},\cdots, h_{n,j}}(G_{i,k,t}) = \Delta_{h_{1,j},h_{2,j},\cdots,,\widehat{h_{i_j,j}},\cdots, h_{n,k}}(\Delta_{h_{i_1,1},\cdots,\widehat{h_{i_k,k}},\cdots,\widehat{h_{i_t,t}},\cdots,h_{i_s,s}}f)\\
 &\ & =  \Delta_{h_{i_1,1},,\cdots,\widehat{h_{i_j},j},\cdots,\widehat{h_{i_k,k}},\cdots, \widehat{h_{i_t,t}},\cdots, h_{i_s,s}}(\Delta_{h_{1,j},h_{2,j},\cdots,h_{i_j-1,j},h_{i_j,j}, h_{i_j+1,j}\cdots, h_{n,j}}f)\\
  &\ & =  \Delta_{h_{i_1,1},,\cdots,\widehat{h_{i_j},j},\cdots,\widehat{h_{i_k,k}},\cdots, \widehat{h_{i_t,t}},\cdots, h_{i_s,s}}(P_j)\\
\end{eqnarray*}
which is also an exponential polynomial. Hence the function $G_{i,k,t}$ satisfies a set of equations like \eqref{teomontelmixtafuerte} with $n-1$ steps, and the induction hypothesis implies that $G_{i,k,t}$ is also an exponential polynomial. Thus,  every function of the form $\Delta_{h_{j_1,a_1}, h_{j_2,a_2}, \cdots, h_{j_{s-2},a_{s-2}}}f$ with $0\leq j_k\leq n$ and $\{a_1,\cdots,a_{s-2}\}$ any subset of cardinality $s-2$ of $\{1,\cdots,s\}$, is an exponential polynomial .

We can repeat the argument to delete  another step in the difference operators used for the definition of each one of the functions $G_{i,k,t}$ above, and maintain the property that the new functions are still exponential polynomials. Iterating the argument as many times as necessary we lead to the fact that  all  functions
$\Delta_{h_{i,j}}f$ are exponential polynomials. Then we apply the case $n=1$ to conclude that $f$  is an exponential polynomial. This ends the proof.

\end{proof}
The following are easy corollaries of Theorem \ref{main}:
\begin{corollary} \label{co1}
Let  $G$ be a topological commutative group and let $f:G\to \mathbb{C}$  be a continuous function. If  there exist exponential polynomials $P_k:G\to \mathbb{C}$, $k=1,\cdots, s$ and elements $\{h_{i,j}\}_{1\leq i\leq n;1\leq j\leq s}$ of $G$ such that,  for every $(i_1,\cdots,i_s)\in\{1,2,\cdots,n\}^s$, the set $\{h_{i_1,1},\cdots,h_{i_s,s}\}$ topologically generates $G$, and $f$ satisfies \eqref{teomontelmixtafuerte}, then $f$ is an exponential polynomial.
\end{corollary}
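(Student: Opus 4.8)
The plan is to deduce Corollary \ref{co1} from Theorem \ref{main} by showing that, in the topological setting, the same algebraic conclusion holds once we keep track of continuity. The only difference between the hypotheses of the corollary and those of the theorem is that the finitely many generating systems $\{h_{i_1,1},\dots,h_{i_s,s}\}$ now generate $G$ only \emph{topologically}, and that $f$ and the $P_k$ are required to be continuous. Since the entire argument of Theorem \ref{main} is purely algebraic and step-by-step constructs finite-dimensional translation invariant subspaces, the natural approach is to verify that each such subspace consists of continuous functions, so that the classical fact (cited in the introduction) identifying finite-dimensional translation invariant spaces of continuous functions with spaces of continuous exponential polynomials applies verbatim.

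First I would observe that the forward difference operators $\Delta_h$ and the translation operators $\tau_h$ map continuous functions to continuous functions, since $\Delta_h f(x)=f(xh)-f(x)$ and $\tau_h f(x)=f(xh)$ are continuous whenever $f$ is. Consequently, starting from the continuous $f$ and the continuous exponential polynomials $P_k$, every function produced in the inductive machinery of Theorem \ref{main} --- the functions $F_i$, $G_{i,k}$, $G_{i,k,t}$, and ultimately the single-step differences $\Delta_{h_{i,j}}f$ --- is again continuous. Thus running the proof of Theorem \ref{main} unchanged, the space $V=\mathbf{span}\{f\}+\tau(P_1)+\cdots+\tau(P_s)$ obtained at the base case $n=1$ is a finite-dimensional translation invariant subspace of $\mathbb{C}^G$ consisting entirely of continuous functions and containing $f$.

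Next I would invoke the structure of finite-dimensional translation invariant spaces: a function lying in such a space is an exponential polynomial, and because the space consists of continuous functions, $f$ is in fact a continuous exponential polynomial. The one place where topological (as opposed to algebraic) generation enters is the base case, where one must argue that $\Delta_{h_{1,k}}(V)\subseteq V$ for a topologically generating set $E=\{h_{1,1},\dots,h_{1,s}\}$ still forces $V$ to be translation invariant under \emph{all} of $G$; this follows because $\tau(V)$ is a finite-dimensional space of continuous functions that is invariant under translation by each generator, hence --- by continuity of translation and the density of the generated subgroup --- invariant under translation by every element of the topological closure, i.e. all of $G$.

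I expect the main obstacle to be precisely this last point: verifying that invariance under the topologically generating set propagates to invariance under the whole group. The argument rests on the map $h\mapsto \tau_h g$ being, for fixed continuous $g$ in the finite-dimensional invariant space, a continuous map from $G$ into that finite-dimensional space with the (necessarily Hausdorff, say) topology of pointwise convergence; the subgroup algebraically generated by $E$ maps into the invariant space, and by continuity so does its closure, which is $G$. Apart from this continuity bookkeeping, no new ideas beyond Theorem \ref{main} are needed, and the remainder is a routine transcription of the algebraic proof with ``continuous'' inserted throughout.
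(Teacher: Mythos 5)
Your proposal is correct and matches the paper's (largely implicit) argument: the paper presents this as an easy corollary obtained by rerunning the proof of Theorem \ref{main} verbatim, and you correctly identify and fill in the one genuinely topological point, namely that invariance of the finite-dimensional space $V$ of continuous functions under translations by a dense subgroup extends to all of $G$ because $V$ is closed in $\mathbb{C}^G$ for the pointwise topology and $h\mapsto\tau_h g$ is continuous. No further comment is needed.
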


\begin{corollary} \label{co2}
 If $\{h_{i_1,1},\cdots,h_{i_s,s}\}$ topologically generates $\mathbb{R}^d$ for every $(i_1,\cdots,i_s)\in\{1,\cdots,n\}^s$, and $f\in\mathcal{D}(\mathbb{R}^d)'$ is a complex valued Schwartz distribution on $\mathbb{R}^d$ which satisfies the equations \eqref{teomontelmixtafuerte} for certain continuous exponential polynomials $P_k$, then $f$ is, in distributional sense, a continuous exponential polynomial. In particular,  there exists a continuous exponential polynomial $p$ such that $f=p$ almost everywhere.
\end{corollary}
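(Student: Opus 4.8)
The plan is to run the induction of Theorem \ref{main} verbatim, but inside the space $W=\mathcal{D}(\mathbb{R}^d)'$ of distributions rather than $\mathbb{C}^{\mathbb{R}^d}$. The point is that the proof of Theorem \ref{main} is purely algebraic: it uses only that $\tau_h$ and $\Delta_h=\tau_h-I$ are commuting linear operators on the ambient space, that applying a difference operator to an exponential polynomial produces another exponential polynomial, and that membership in a finite-dimensional translation-invariant subspace forces an element to be an exponential polynomial. All three survive the passage to $W$: $\tau_h$ and $\Delta_h$ act on distributions and commute; a difference $\Delta_hP_k$ lies in the finite-dimensional space $\tau(P_k)$ and so remains a continuous exponential polynomial; and the structural results quoted in the Introduction say exactly that a distribution lying in a finite-dimensional translation-invariant subspace of $W$ coincides, in the distributional sense, with a continuous exponential polynomial. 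Accordingly I would read ``exponential polynomial'' throughout the argument as ``element of some finite-dimensional translation-invariant subspace of $W$.''

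With this dictionary fixed, the induction step of Theorem \ref{main} transfers word for word, since it consists only of the commutation relations $\Delta_{h}\Delta_{h'}=\Delta_{h'}\Delta_{h}$ among difference operators together with repeated invocations of the inductive hypothesis; the generating condition needed for the reduced systems is inherited exactly as in that proof, because any admissible selection of steps for a reduced system is an admissible selection for the original one and hence topologically generates $\mathbb{R}^d$. Thus the whole argument collapses, as there, to the base case $n=1$.

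The base case is where the only genuinely new, analytic input is required. From $\Delta_{h_{1,k}}f=P_k$ I form $V=\mathbf{span}\{f\}+\tau(P_1)+\cdots+\tau(P_s)$, which is finite-dimensional (each $\tau(P_k)$ is finite-dimensional because $P_k$ is an exponential polynomial) and satisfies $\Delta_{h_{1,k}}V\subseteq V$, hence $\tau_{h_{1,k}}V\subseteq V$, for every $k$. Since each $\tau_{h_{1,k}}$ is a bijection of the finite-dimensional space $V$, the subgroup $H$ generated by $\{h_{1,1},\dots,h_{1,s}\}$ leaves $V$ invariant. Here $H$ is only \emph{dense} in $\mathbb{R}^d$, so to obtain full translation invariance I would use that the orbit map $h\mapsto\tau_h T$ is continuous from $\mathbb{R}^d$ into $W$ and that a finite-dimensional subspace of $W$ is closed: for $g\in V$ and $h\in\mathbb{R}^d$, approximating $h$ by elements of $H$ gives $\tau_h g\in\overline{V}=V$. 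Consequently $V$ is a finite-dimensional translation-invariant subspace of $W$ containing $f$, and the quoted structural theorem identifies $f$ with a continuous exponential polynomial $p$. The final ``almost everywhere'' assertion is then immediate: $p$ is continuous and $f=p$ in $\mathcal{D}(\mathbb{R}^d)'$, and two continuous functions that agree as distributions agree everywhere, so in particular $f=p$ a.e.

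I expect the main obstacle to be precisely this upgrade from the dense subgroup $H$ to all of $\mathbb{R}^d$: one must be sure that the topology of $\mathcal{D}(\mathbb{R}^d)'$ makes translation continuous and finite-dimensional subspaces closed, and that the Introduction's structural result (finite-dimensional translation-invariant $\Rightarrow$ continuous exponential polynomial) genuinely applies to the $V$ built above. An alternative route that avoids redoing the induction would be to regularize: for $\varphi\in\mathcal{D}(\mathbb{R}^d)$ the smooth function $f*\varphi$ satisfies the same equations with right-hand sides $P_k*\varphi$ (again continuous exponential polynomials), so Corollary \ref{co1} applies to each $f*\varphi$; the difficulty would then shift to showing that all these exponential polynomials lie in one fixed finite-dimensional space, so that letting $\varphi\to\delta$ keeps $f$ inside it.
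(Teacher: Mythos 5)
Your proposal is correct and follows essentially the same route as the paper, whose proof simply reruns the argument of Theorem \ref{main} inside $\mathcal{D}(\mathbb{R}^d)'$ and invokes the Anselone--Korevaar theorem to identify a distribution lying in a finite-dimensional translation-invariant subspace with a continuous exponential polynomial. The only difference is that you make explicit the upgrade from invariance under the dense subgroup generated by the steps to full translation invariance (via weak-$*$ continuity of $h\mapsto\tau_h T$ and closedness of finite-dimensional subspaces), a point the paper leaves implicit in its remark that the distributional operators ``inherit all properties'' of their classical counterparts.
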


\begin{proof}
It is enough to follow the very same steps of the demonstration of Theorem \ref{main}, just taking into account Anselone-Korevaar's theorem and that the operators $\tau_h$ and $\Delta_h$, which are defined for Schwartz distributions $f\in \mathcal{D}(\mathbb{R}^d)'$ by the expressions 
$\langle \tau_h(f),\varphi\rangle = \langle f,\tau_{-h}(\varphi)\rangle$ and  $\langle \Delta_h(f),\varphi\rangle = \langle f,\Delta_{-h}(\varphi)\rangle$, respectively, inherit all properties from their corresponding versions, originally defined for ordinary functions.
\end{proof}
\begin{remark}
If we impose $P_k=0$ for all $k$ in Theorem \ref{main} or Corollaries \ref{co1}, \ref{co2}, then the function $f$ will be a polynomial. Thus, these results generalize Theorem \ref{MT_unmixed} to the mixed differences case.
\end{remark}

\begin{remark} \label{notita}
The condition that, for every $(i_1,\cdots,i_s)\in\{1,2,\cdots,n\}^s$,   $\{h_{i_1,1},\cdots,h_{i_s,s}\}$ either generates or (in the case that $G$ is a topological group) topologically generates $G$,  is a very natural necessary condition. For example, for $G=\mathbb{R}^d$, if exists $(i_1,\cdots,i_s)\in\{1,2,\cdots,n\}^s$ such that  $h_{i_1,1}\mathbb{Z}+h_{i_2,2}\mathbb{Z}+\cdots+h_{i_s,s}\mathbb{Z}$ is not dense in $\mathbb{R}^d$, then there exists a non-differentiable continuous function $f:\mathbb{R}^d\to\mathbb{R}$ such that $\Delta_{h_{i_k,k}}f=0$ for $k=1,\cdots, s$ and, henceforth, $f$ solves the functional equations \eqref{teomontelmixtafuerte} with $P_k=0$ for all $k$ (see \cite{A_An-Kor-Th}  for a proof of this claim) but is not a polynomial nor an exponential polynomial, since every continuous polynomial on $\mathbb{R}^d$ is an ordinary polynomial in $d$ variables \cite{frechet} and, henceforth, is a  differentiable function, and continuous exponential polynomials on $\mathbb{R}^d$ are also differentiable functions.  The merit of Theorem \ref{main} and Corollaries \ref{co1}, \ref{co2}  is, thus, to prove that their hypotheses  are sufficient  to guarantee that $f$ is an exponential polynomial polynomial.
\end{remark}

\bigskip 

\footnotesize{Jose Maria Almira

Departamento de Ingenier\'{\i}a y Tecnolog\'{\i}a de Computadores, Facultad de Inform\'{a}tica

Universidad de Murcia

Campus de Espinardo

30100 Murcia (Spain)

\vspace{2mm}

and

\vspace{2mm}

Departamento de Matem\'{a}ticas, Universidad de Ja\'{e}n

E.P.S. Linares,  C/Alfonso X el Sabio, 28

23700 Linares (Ja\'{e}n) Spain

e-mail: jmalmira@um.es; jmalmira@ujaen.es 

}

\end{document}